\theoremstyle{plain}
 \newtheorem{thm}{Theorem}[section]
 \newtheorem{prop}[thm]{Proposition}
 \newtheorem{lem}[thm]{Lemma}
 \newtheorem{cor}[thm]{Corollary}
\theoremstyle{definition}
 \newtheorem{dfn}[thm]{Definition}
\theoremstyle{remark}
 \numberwithin{equation}{section}
\renewcommand{\leq}{\leqslant}
\renewcommand{\geq}{\geqslant}
\renewcommand{\setminus}{\smallsetminus}
\def\GL{\mbox{\rm GL}}
\def\PGL{\mbox{\rm PGL}}
\def\stab{\text{\rm Stab}}
\def\Aut{\text{\rm Aut}}
\def\Fix{\text{\rm Fix}}
\def\id{\text{\rm id}}
\def\con{\text{\sf con}}
\def\Q{\mathbf{a}}
\title[Decomposition Theorems for Automorphism Groups of Trees]{Decomposition Theorems for Automorphism Groups of Trees}
\date{\today}
\subjclass[2010]{20E08, 22D05}
\keywords{Groups acting on trees, automorphism group, regular tree, label-regular trees, Lie group decomposition}
\author[Max Carter]{Max Carter} 
\address{School of Mathematical and Physical Sciences \\ 
University of Newcastle \\ 
Callaghan \\
Australia}
\email{max.carter@newcastle.edu.au}
\author[George Willis]{George Willis}
\address{School of Mathematical and Physical Sciences \\ 
University of Newcastle \\ 
Callaghan \\
Australia}
\email{george.willis@newcastle.edu.au}
\thanks{This research was supported by the Australian Research Council grant FL170100032.} %% 
\begin{document}

\begin{abstract}
Motivated by the Bruhat and Cartan decompositions of general linear groups over local fields, double cosets of the group of label preserving automorphisms of a label-regular tree over the fixator of an end of the tree and over maximal compact open subgroups are enumerated.  This enumeration is used to show that every continuous homomorphism from the automorphism group of a label-regular tree has closed range.
\end{abstract}

\maketitle
%---------------------------------------------------------------------------------------------------------------------------------------------------------------------------------------------------------------------------------------------

\section{Introduction}

Totally disconnected locally compact groups (t.d.l.c.\ groups here on after) are a broad class which includes Lie groups over local fields. It is not known, however, just how broad this class is, and one approach to investigating this question is to attempt to extend ideas about Lie groups to more general t.d.l.c.\ groups. This approach has (at least partially) motivated works such as \cite{Loc,Glo1,Glo2}.

That is the approach taken here for Bruhat and Cartan decompositions of general linear groups over local fields, which are respectively double-coset enumerations of $\GL(n,\mathbb{Q}_p)$ over the subgroups, $B$, of upper triangular matrices, and $\GL(n,\mathbb{Z}_p)$, of matrices with $p$-adic integer entries. Such decompositions of real Lie groups are given in \cite[\S\S VII.3 and 4]{Knapp}, and of Lie groups over local fields in \cite{BenOh,Bru,DelSec,IwaMat}. Automorphism groups of regular trees have features in common with the groups $\GL(2,\mathbb{Q}_p)$ (in particular, $\PGL(2,\mathbb{Q}_p)$ has a faithful and transitive action on a regular tree, \cite[Chapter II.1]{Serre}). Here, we find double coset enumerations of automorphism groups of trees that correspond to Bruhat and Cartan decompositions. 

The pioneering work on automorphism groups of trees is the 1970 paper \cite{Ref1} by Jacques Tits. In his paper, Tits shows that the automorphism groups of regular trees and of trees with a regular vertex labelling in most cases have a non-trivial simple subgroup, and that the tree and vertex labelling can be recovered from this subgroup. In the present note, we identify subgroups of Tits' groups that correspond to groups of upper triangular matrices and of matrices over $p$-adic integers and enumerate the double cosets over these subgroups. The main difference is that, in the case of the Bruhat decomposition, the number of double cosets is no longer finite and, in the case of the Cartan decomposition, double coset representatives are no longer powers of a single element. 

The Bruhat and Cartan decompositions have further implications for the structure of Lie groups, both real and $p$-adic, and their unitary representations. It may be shown, for example, that continuous homomorphisms from simple groups in these classes have closed range, and we use the generalised Cartan decomposition to show that the same holds for automorphism groups of label-regular trees. In the special case when the double coset representatives are powers of a single element, the contraction subgroup for that element is an essential ingredient in the proof, and a key step in our argument is to derive a corresponding method for the Cartan decomposition derived here. 

Examples of simple t.d.l.c.\ groups not all of whose homomorphic images are closed may be found in Le Boudec's articles \cite{Ref3, Ref4}. These groups act on regular trees and are not closed as subgroups of the full automorphism group. The property of having all homomorphic images closed or, more fundamentally, the generalised contraction group property for double coset representatives, thus sheds light on the question of how typical $p$-adic Lie groups are among general t.d.l.c. groups. 

%---------------------------------------------------------------------------------------------------------------------------------------------------------------------------------------------------------------------------------------------

\section{Label-Regular Trees and their Automorphism Groups}\label{sec:prelim}

Label-regular trees and their automorphism groups were first studied in \cite{Ref1}, although not with that name.  Terminology and results that will be used here are recalled in this section and versions of the Bruhat and Cartan decompositions for these groups given in the next section. The vertex and edge sets of the tree $\mathcal{T}$ will be denoted by $V\mathcal{T}$ and $E\mathcal{T}$ respectively. All trees considered here are \emph{locally finite}, {\it i.e.\/}, all vertices are incident on only finitely many edges. 

A \emph{path} in $\mathcal{T}$ is a sequence of vertices $( v_i )_{i \in I} \subseteq V\Gamma$, with $I$ an indexing set, $v_i$ adjacent $v_{i+1}$ for all $i \in I$, and no vertex in $V\Gamma$ occurring more than once. The path $( v_i )_{i \in I} \subseteq V\Gamma$ is a \emph{ray} if $I = \mathbb{N}$ and is \emph{bi-infinite} if $I = \mathbb{Z}$.  An \emph{end} of $\mathcal{T}$ is an equivalence class of rays, with two rays being equivalent if their intersection is also a ray. The set of all ends of $\mathcal{T}$ is called the \emph{boundary} and is denoted $\partial \mathcal{T}$. The distance between two vertices $u,v \in V\mathcal{T}$ is the number of edges on the shortest path between $u$ and $v$. 

Let $\mathcal{T}$ be an infinite tree with all vertices having degree greater than~$1$. A \emph{labelling} of $\mathcal{T}$ is a map $\lambda: V\mathcal{T} \rightarrow C$ with $C$ set of \emph{labels}. For $v \in V\mathcal{T}$, let $N(v)$ be the set of all vertices adjacent to $v$ in $\mathcal{T}$ and define a multiset $L(v) = \{ \lambda(w) : w \in N(v) \}$. Then $\mathcal{T}$ is a \emph{label-regular tree} if $L(v)$ depends only on the value of $\lambda(v)$, that is, $v_1,v_2 \in V\mathcal{T}$ having the same label implies that they have the same numbers of neighbours with each label. Following~\cite[\S5.2]{Ref1}, the labelling is \emph{normal} if $\lambda$ is surjective and the group of automorphisms preserving the labelling (see below) is transitive on the sets $\lambda^{-1}(i)$, $i\in C$. All labellings here are assumed to be normal.

Denote, for a label-regular tree $\mathcal{T}$ with normal labelling, the number of vertices with the label $j\in C$ neighbouring each vertex with label $i\in C$ by $a_{ij}$. Then $\Q = (a_{ij})_{i,j\in C}$ is a $C\times C$ matrix that determines  $\mathcal{T}$ up to isomorphism. The matrix $\Q$ need not be symmetric but does satisfy that $a_{ij}=0\iff a_{ji}=0$. Further the graph $G_\Q$, that has $C$ as its vertex set and $\{i,j\}$ is an edge if $a_{ij}\ne0$, is connected, see~\cite[Proposition~5.3]{Ref1}. Conversely, for every $C\times C$ matrix $\Q = (a_{ij})_{i,j\in C}$ with non-negative integer entries and such that $a_{ij}=0 \iff a_{ji}=0$ and graph $G_\Q$ connected, there is a label-regular tree with labels from $C$ such that every vertex labelled $i$ has $a_{ij}$ neighbours labelled $j$. This tree, which will be denoted by $\mathcal{T}_\Q$, is unique up to isomorphism and is called the \emph{$\Q$-covering of $C$} in~\cite{Ref1}. 

The group of automorphisms, $\Aut(\mathcal{T}_\Q)$, of $\mathcal{T}_\Q$ is the group of all automorphisms of the underlying tree that also preserve the labelling, {\it i.e.\/}, of all $\varphi\in \Aut(\mathcal{T}_\Q)$ satisfying $\lambda(\varphi(v)) = \lambda(v)$ for all $v \in V\mathcal{T}_\Q$. For $G\leq \Aut(\mathcal{T}_\Q)$, the \emph{stabiliser} subgroup of $Y\subseteq V\mathcal{T}_\Q$, denoted $\stab_{G}(Y)$, is the set of all $g \in G$ satisfying $gY = Y$. Similarly, the \emph{fixator}, denoted $\Fix_{G}(Y)$, of $Y$ is the subgroup of all $g \in G$ satisfying $gy = y$ for all $y \in Y$. We shall be interested in closed subgroups of $\Aut(\mathcal{T})$, {\it i.e.\/}, subgroups closed under the topology of pointwise convergence on vertices of $\mathcal{T}$.

The final section of this note concerns $\Aut^+(\mathcal{T}_\Q)$, the closed subgroup of $\Aut(\mathcal{T}_\Q)$ generated by $\left\{\Fix(\{u,v\})\mid \{u,v\}\in E\mathcal{T}\right\}$. It is shown in~\cite[Th\'eor\`eme~4.5]{Ref1} that, if $\Aut(\mathcal{T}_\Q)$ leaves no end or proper non-empty sub-tree of $\mathcal{T}_\Q$ invariant, then $\Aut^+(\mathcal{T}_\Q)$ is a normal subgroup that is either trivial or simple. Furthermore,~\cite[\S5.7]{Ref1} shows that these hypotheses on $\Aut(\mathcal{T}_\Q)$ are satisfied if $a_{ij}\ne1$ for all $i,j\in C$. The quotient group $\Aut(\mathcal{T}_\Q)/\Aut^+(\mathcal{T}_\Q)$ is shown in~\cite[Proposition~6.7]{Ref1} to be isomorphic to the free product of a number of copies of the integers and the group of order~$2$. Typically, this quotient is infinite and, being a free product, has homomorphic images in topological groups that are not closed. We shall see that, on the other hand, homomorphic images of $\Aut^+(\mathcal{T}_\Q)$ are always closed.

%---------------------------------------------------------------------------------------------------------------------------------------------------------------------------------------------------------------------------------------------

\section{Decompositions for Automorphism Groups of Label-Regular Trees} \label{sec:decomp}

In what follows, a finite or infinite sequence of labels, $(c_i)_{i \in I} \subseteq C$ will be said to be \emph{compatible} with the labelling on the tree $\mathcal{T}_\Q$ if there is a path $(v_i)_{i \in I} \subseteq V\mathcal{T}_\Q$ satisfying $\lambda(v_i) = c_i$ for each $i \in I$.

\subsection{Cartan Decomposition for Label-Regular Trees}

Here we show that for any label-regular tree $\mathcal{T}_\Q$, the group $\Aut(\mathcal{T}_\Q)$ admits a $(KAK)$-, or Cartan-like, decomposition. The compact subgroup $K$ in our decomposition is the stabiliser of a vertex and the set $A$ of double coset representatives is indexed by a set of finite sequences of labels compatible with the labelling of the tree. In the case when the set $C$ of labels has just one element, $\Q$ is a $1\times1$ matrix, $\mathcal{T}_\Q$ must be regular and a compatible sequence of length $d$ is simply a string of $d$ copies of that one label. The coset representatives chosen below in Theorem~\ref{thm:kak} corresponding to these strings may then be powers of a single element, as in the usual Cartan decomposition.

\begin{thm} \label{thm:kak}
Let $\mathcal{T}_\Q$ be a label-regular tree with labels in a set, $C$, and let $K = \stab_{\Aut(\mathcal{T}_\Q)}(v)$ for a fixed vertex $v \in V\mathcal{T}_\Q$. Let $A$ be the set of all finite sequences in $C$ that are compatible with $\mathcal{T}_\Q$ and begin and end with the label $\lambda(v)$. For each $\alpha\in A$, choose $v_\alpha\in V\mathcal{T}_\Q$ and $g_\alpha\in \Aut(\mathcal{T}_\Q)$ such that the sequence of labels of vertices on the unique path from $v$ to $v_\alpha$ is $\alpha$ and $g_\alpha(v) = v_\alpha$. Then the double cosets $Kg_\alpha K$, $\alpha\in A$, are pairwise disjoint and 
\begin{equation}
\label{eq:kak}
\Aut(\mathcal{T}_\Q) = \bigsqcup_{\alpha\in A} Kg_\alpha K.
\end{equation}
\end{thm}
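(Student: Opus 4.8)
The plan is to attach to each $g\in\Aut(\mathcal{T}_\Q)$ the finite sequence of labels read along the unique reduced path from $v$ to $g(v)$. Since $g$ preserves the labelling we have $\lambda(g(v))=\lambda(v)$, so this sequence is compatible with $\mathcal{T}_\Q$ and begins and ends with $\lambda(v)$; that is, it is an element of $A$. I would then show that this sequence is a complete invariant of the double coset $KgK$, meaning that two elements share it precisely when they lie in a common double coset. Both the pairwise disjointness of the cosets $Kg_\alpha K$ and the exhaustion of $\Aut(\mathcal{T}_\Q)$ in \eqref{eq:kak} drop out of this once the representatives $g_\alpha$ are shown to realise every value of the invariant.

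The technical core is a homogeneity statement: \emph{any label-preserving graph isomorphism $\varphi\colon F_1\to F_2$ between finite subtrees of $\mathcal{T}_\Q$ extends to an element of $\Aut(\mathcal{T}_\Q)$}. I would prove this by a back-and-forth construction, enumerating $V\mathcal{T}_\Q$ and extending $\varphi$ one vertex at a time while alternating between enlarging the domain and the range so that the limit map is a bijection. To adjoin a vertex $u$ lying outside the current (finite, connected) domain $D$, note that $u$ has a unique neighbour $x$ in $D$, and one must send $u$ to a hitherto-unused neighbour of $\varphi(x)$ bearing the label $\lambda(u)$. Because $\varphi$ is label-preserving, $x$ and $\varphi(x)$ carry the same label and hence, by label-regularity, have exactly $a_{\lambda(x)\lambda(u)}$ neighbours labelled $\lambda(u)$; as $u$ is such a neighbour of $x$ lying \emph{outside} $D$, the neighbours of $\varphi(x)$ labelled $\lambda(u)$ that already lie in the range number at most $a_{\lambda(x)\lambda(u)}-1$, so at least one target remains free. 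Choosing it as $\varphi(u)$ extends the isomorphism.

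From this I would extract the orbit statement actually needed. Writing $S_\alpha$ for the set of vertices $w$ such that the path $v\to w$ has label sequence $\alpha$, the group $K=\stab_{\Aut(\mathcal{T}_\Q)}(v)$ acts transitively on $S_\alpha$: for $w_1,w_2\in S_\alpha$ the two paths $v\to w_1$ and $v\to w_2$ carry identical labels, so the vertex-by-vertex bijection between them is a label-preserving isomorphism fixing $v$, and any extension of it to $\Aut(\mathcal{T}_\Q)$ lies in $K$ and sends $w_1$ to $w_2$. Disjointness and exhaustion then follow. For disjointness, observe that the label sequence of $v\to g(v)$ is constant on each double coset and equals $\alpha$ there: if $g=k_1g_\alpha k_2$ then $g(v)=k_1(v_\alpha)$, and applying the label-preserving, $v$-fixing map $k_1^{-1}$ carries the path $v\to g(v)$ onto the path $v\to v_\alpha$, whose label sequence is $\alpha$ by the choice of $g_\alpha$; distinct $\alpha$ therefore give disjoint cosets. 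For exhaustion, given an arbitrary $g$ let $\alpha\in A$ be the label sequence of $v\to g(v)$, so that $g(v)$ and $v_\alpha$ both lie in $S_\alpha$; by transitivity there is $k_1\in K$ with $k_1(v_\alpha)=g(v)$, whence $(k_1g_\alpha)^{-1}g$ fixes $v$ and lies in $K$, giving $g\in Kg_\alpha K$.

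I expect the back-and-forth extension lemma to be the main obstacle. The combinatorial counting that guarantees a free target neighbour at every stage — the bookkeeping with the multiplicities $a_{\lambda(x)\lambda(u)}$ together with the observation that $u$ accounts for one same-labelled neighbour outside the domain — is the only place where label-regularity is genuinely invoked, and it is where an oversight would most easily arise. The remainder of the argument is then essentially formal once transitivity of $K$ on each $S_\alpha$ is in hand.
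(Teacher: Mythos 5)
Your proposal is correct, and its skeleton is the same as the paper's: the complete invariant of a double coset $KgK$ is the sequence of labels along the unique path from $v$ to $g(v)$, and disjointness plus exhaustion follow once $K$ is known to act transitively on each set $S_\alpha$. Where you differ is in how that transitivity is obtained. The paper's proof is essentially a two-sentence assertion that $Kg_\alpha K$ is precisely the set of $g$ whose path $v$ to $g(v)$ carries the label sequence $\alpha$; the nontrivial inclusion (that any two vertices reached by paths with identical label sequences from $v$ are in the same $K$-orbit) is left implicit, being covered by the standing assumption that the labelling is normal together with Tits' homogeneity results for label-regular trees in \cite{Ref1}. You instead prove this transitivity from scratch via a back-and-forth extension lemma, and your counting is sound: since $D$ and $\varphi(D)$ are subtrees, the neighbours of $\varphi(x)$ lying in $\varphi(D)$ correspond bijectively (under $\varphi$) to the neighbours of $x$ lying in $D$, so the vertex $u$, being a $\lambda(u)$-labelled neighbour of $x$ \emph{outside} $D$, guarantees that $\varphi(x)$ has at most $a_{\lambda(x)\lambda(u)}-1$ used targets and hence at least one free one. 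This buys you a self-contained argument which shows, in particular, that label-regularity alone already forces $\Aut(\mathcal{T}_\Q)$ to be transitive on each label class $\lambda^{-1}(i)$, so the normality hypothesis amounts to little more than surjectivity of $\lambda$ — a point the paper does not make explicit. One small repair to your back-and-forth: when the next vertex $u$ in the enumeration is not adjacent to the current domain $D$, you must first adjoin, one at a time, the vertices on the geodesic from $D$ to $u$, so that each extension step really does concern a vertex with a unique neighbour in a connected domain; this is the hypothesis your phrase ``$u$ has a unique neighbour $x$ in $D$'' tacitly requires, and with it the induction closes.
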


\begin{proof} If $g \in Kg_\alpha K$ for $\alpha \in A$, then $g(v)\in Kg_\alpha(v)$. Hence the path from $v$ to $g(v)$ is the same length as the path from $v$ to $g_\alpha(v)$ and the sequence of labels on the path from $v$ to $g(v)$ is $\alpha$. Indeed, $Kg_\alpha K$ is precisely the set of elements $g\in\Aut(\mathcal{T}_\Q)$ such that the sequence of labels on the path $v$ to $g(v)$ is equal to $\alpha$. Hence the double cosets $Kg_\alpha K$ are distinct for distinct $\alpha$, and every element of $\Aut(\mathcal{T}_\Q)$ belongs to one of these double cosets.
\end{proof}

Thus the problem of enumerating all the coset representatives in a ($KAK$)-decomposition is equivalent to enumerating all the possible distinct sequences of labels compatible with $\mathcal{T}_\Q$, starting at the vertex $v$ and extending out to a vertex of label $\lambda(v)$ at distance $d$ from $v$, for every possible distance $d$. 

Note that there is in fact a distinct $(KAK)$-decompositions of $\Aut(\mathcal{T}_\Q)$ for each label $c\in C$. The vertex $v$ in the statement of Theorem~\ref{thm:kak} is labelled $c = \lambda(v)$ and, if $\tilde{v}$ is any other vertex with the same label, there is $h\in\Aut(\mathcal{T}_\Q)$ with $h(v) = \tilde{v}$. Then the sequence of labels on the path from $\tilde{v}$ to $h(v_\alpha)$ is $\alpha$ and, replacing $v$ with $\tilde{v}$ in~\eqref{eq:kak}, the indexing set~$A$ in the disjoint union is the same, $K = \stab_{\Aut(\mathcal{T}_\Q)}(\tilde{v})$, and $\tilde{g}_\alpha := hg_\alpha h^{-1}$ are double-coset representatives.

\subsection{Bruhat Decomposition for Label-Regular Trees}

Here we show that for any label-regular tree $\mathcal{T}_\Q$, the group $\Aut(\mathcal{T}_\Q)$ admits $(BWB)$-, or Bruhat-like, decompositions. In the familiar Bruhat decomposition, $B$ is a parabolic subgroup of $G$, and $W$ is finite. When $|C|=1$ and $\mathcal{T}_\Q$ is a regular tree, $B$ may be replaced by $\Fix_{\Aut(\mathcal{T}_\Q)}(\infty)$, the fixator of an end of the tree and $W$ by the finite group $\{\iota,\sigma\}$ with $\sigma$ an inversion of $\mathcal{T}_\Q$; all fixators of ends of the regular tree are conjugate and so there is just one $B$ and one Bruhat decomposition up to conjugation. When $|C|>1$, it is no longer the case that all end fixators are conjugate and, consequently, there are many non-conjugate $(BWB)$-decompositions. It is also no longer the case that $B$ is $2$-transitive on the boundary of $\mathcal{T}_\Q$ and, consequently, there are infinitely many double cosets in each decomposition. We shall describe the $(BWB)$-decomposition for those subgroups, $B$, that contain a hyperbolic element. Observations about hyperbolic elements required are recorded in the next lemma.

\begin{lem}\label{lem:breaks}
Let $\mathcal{T}_\Q$ be a label-regular tree and suppose that $g\in \Aut(\mathcal{T}_\Q)$ is hyperbolic, {\it i.e.\/}, a translation of $\mathcal{T}_\Q$. Suppose that $\infty \in \partial \mathcal{T}_\Q$ is the attracting end of~$g$ and $-\infty$ is the repelling end. Then the bi-infinite path between $-\infty$ and $\infty$, $\ell = (v_i)_{i=-\infty}^{\infty} \subseteq V\mathcal{T}_\Q$, is the axis of $g$ and the sequence, $(\lambda(v_i))_{i=-\infty}^{\infty}$, of labels is periodic. Suppose that the period of this sequence is $p$ and that $\gamma = (c_i)_{i=1}^{p}$ is the repeated sequence of labels.  

Suppose that $\omega\in \partial \mathcal{T}_\Q\setminus\{\infty\}$ contains a ray whose sequence of labels is periodic with repeating pattern $\gamma$. Let $r = (w_i)_{i=-\infty}^{\infty} \subseteq V\mathcal{T}_\Q$ be the bi-infinite path between $\omega$ and $\infty$. Then there are integers $m \leq n \in \mathbb{Z}$ such that the sequences $(\lambda(w_{i}))_{i=-\infty}^m$ and $(\lambda(w_{i}))_{i=n}^\infty$ are periodic with repeating pattern $\gamma$ and that $n-m$ is minimised.
\end{lem}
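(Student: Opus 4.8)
The plan is to prove the lemma in two stages: first the standing assertions that $\ell$ is the axis of $g$ with periodic label sequence, and then the claim about $\omega$, $r$, and the integers $m\le n$.

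For the first stage I would recall the standard fact that a hyperbolic automorphism of a tree has a well-defined axis, namely the unique bi-infinite geodesic joining its repelling and attracting ends; here that geodesic is $\ell=(v_i)_{i=-\infty}^\infty$, oriented so that $i\to+\infty$ approaches $\infty$. Writing $t\ge 1$ for the translation length, one has $gv_i=v_{i+t}$, and since every element of $\Aut(\mathcal{T}_\Q)$ preserves the labelling, $\lambda(v_{i+t})=\lambda(gv_i)=\lambda(v_i)$ for all $i$. Hence $(\lambda(v_i))_i$ is periodic with period dividing $t$; let $p$ be the minimal period and $\gamma=(c_i)_{i=1}^p$ the block obtained by reading the labels of $\ell$ towards $\infty$.

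For the second stage the key geometric input is the very definition of an end recalled in Section~\ref{sec:prelim}: two rays represent the same end precisely when their intersection is again a ray, that is, when they eventually coincide. I would apply this twice. The sub-ray of $r$ pointing towards $\infty$ and the sub-ray of $\ell$ pointing towards $\infty$ both represent $\infty$, hence coincide from some vertex onwards; on this common sub-ray the labels of $r$ agree with those of $\ell$, so, read towards $\infty$, the tail $(\lambda(w_i))_{i\ge n}$ is periodic with block $\gamma$ for all sufficiently large $n$. Symmetrically, the hypothesis on $\omega$ furnishes a ray $\rho$ representing $\omega$ whose labels are periodic with block $\gamma$; the sub-ray of $r$ pointing towards $\omega$ also represents $\omega$, hence coincides with $\rho$ from some vertex onwards, so, read towards $\omega$, the tail $(\lambda(w_i))_{i\le m}$ is periodic with block $\gamma$ for all sufficiently negative $m$. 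This is the one place where the extra hypothesis on $\omega$ is indispensable: a general end of $\mathcal{T}_\Q$ need not carry a $\gamma$-periodic ray, and without it the left tail need not be periodic at all.

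It then remains to produce the minimizing pair. By the previous paragraph the set $L$ of indices $m$ for which $(\lambda(w_i))_{i\le m}$ is periodic with block $\gamma$ is nonempty, and it is a down-set, since a sub-tail of a periodic one-sided sequence is periodic; dually the set $R$ of indices $n$ for which $(\lambda(w_i))_{i\ge n}$ is periodic with block $\gamma$ is a nonempty up-set (either set possibly being all of $\mathbb{Z}$ in the degenerate case where $r$ is $\gamma$-periodic throughout, as happens for instance when $\omega=-\infty$ and $r=\ell$). Choosing $m_*\in L$ and $n_*\in R$: either $m_*\le n_*$, or else $n_*<m_*$ forces $n_*\in L$ and then $(n_*,n_*)$ is admissible; in either case the set of pairs $(m,n)$ with $m\in L$, $n\in R$ and $m\le n$ is nonempty. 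On this set $n-m$ takes nonnegative integer values, so by well-ordering it attains a minimum, which is exactly the pair asserted by the lemma. The genuinely delicate points are not any deep difficulty but bookkeeping: one must fix the orientation conventions so that \emph{periodic with repeating pattern} $\gamma$ is read towards the relevant end in each tail (towards $\infty$ on the right, towards $\omega$ on the left, these being the directions in which the two coinciding rays are naturally traversed), and confirm that both tails inherit the same period $p$ and block $\gamma$ --- the right tail because it is literally a tail of $\ell$, the left tail because $\rho$ is $\gamma$-periodic by hypothesis. The main conceptual step is the double application of the principle that rays defining the same end eventually coincide; everything after that is the elementary remark that a nonempty set of nonnegative integers has a least element.
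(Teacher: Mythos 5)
Your proposal is correct and follows essentially the same route as the paper: translation-invariance of the labelling makes the axis labels periodic, the two sub-rays of $r$ eventually coincide with a ray in $\ell$ (towards $\infty$) and with the hypothesised $\gamma$-periodic ray in $\omega$, giving the two periodic tails, and then one minimises. Your only departure is at the last step, where you minimise $n-m$ directly by well-ordering rather than taking, as the paper does, the largest admissible $m'$ and smallest admissible $n'$ --- a slightly more careful formulation, since in the degenerate case where the whole of $r$ is $\gamma$-periodic the paper's ``largest $m'$'' does not literally exist, a wrinkle your argument handles explicitly.
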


\begin{proof}
If the distance by which $g$ translates is $p'$, then $\lambda(v_{i+p'}) = \lambda(v_i)$ for all $i\in\mathbb{Z}$ and $(\lambda(v_i))_{i=-\infty}^{\infty}$ is $p$-periodic with $p$ a divisor of $p'$. 

The path $r$ from $\omega$ to $\infty$ contains a ray belonging to $\omega$ at one end and a ray belonging to $\infty$ at the other. Hence there are $m'\leq n'\in \mathbb{Z}$ such that $(\lambda(w_{i}))_{i=-\infty}^{m'}$ and $(\lambda(w_{i}))_{i=n'}^\infty$ are periodic with repeating pattern $\gamma$. Let $m$ and $n$ be respectively the largest value of $m'$ and the smallest value of $n'$ for which this holds.
\end{proof}

The sequence of labels $\tau = (\lambda(w_{m+i}))_{i=0}^{n-m}$ in Lemma~\ref{lem:breaks} will be called the \emph{transition from $\omega$ to $\infty$} in the labelling on the path between $\omega$ and $\infty$. A transition sequence $\tau = (t_i)_{i=0}^{n-m}$ has $t_0 = c_1 = t_{n-m}$ and is such that the sequence $c_2\tau c_2$, formed by adding $c_2$ at the beginning and end of $\tau$, is compatible with the labelling of $\mathcal{T}_\Q$. It may happen that $m=n$ and the transition is a single vertex. The set of \emph{transitions for $g$}, as in Lemma~\ref{lem:breaks}, is the set of all transitions from $\omega$ to $\infty$ for $\omega\in \partial \mathcal{T}_\Q\setminus\{\infty\}$. We now describe the $(BWB)$-decomposition in the case when $B$ contains a hyperbolic element.

\begin{thm}\label{thm:bwb}
Let $\mathcal{T}_\Q$ be a label-regular tree with labels in $C$. Let $B= \Fix_{\Aut(\mathcal{T}_\Q)}(\infty)$ with $\infty\in\partial\mathcal{T}_\Q$. Suppose that $B$ contains a hyperbolic element $g$ that has $\infty$ as an attracting end. Let $W$ be the set of all transitions for $g$. Then there are elements $g_\tau\in \Aut(\mathcal{T}_{\Q})$, $\tau\in W$, such that the double cosets $Bg_\tau B$ are disjoint for distinct ~$\tau$ and
\begin{equation}
\label{eq:bwb}
G = B \sqcup \bigsqcup_{\tau\in W} Bg_\tau B.
\end{equation}
\end{thm}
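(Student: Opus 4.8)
The plan is to attach to each coset a single combinatorial invariant, namely the transition from $h(\infty)$ to $\infty$, and to show this gives a bijection between the nontrivial double cosets and $W$. Write $G=\Aut(\mathcal{T}_\Q)$ and, for $h\in G$, set $\omega_h=h(\infty)$. First I would record the rigidity supplied by the hyperbolic element: since $g\in B$ translates towards its attracting end $\infty$, its axis is eventually the ray to $\infty$, so by Lemma~\ref{lem:breaks} the labels on any ray representing $\infty$ are periodic with pattern $\gamma$. As every element of $G$ preserves labels, each end of the orbit $G\cdot\infty$ also carries a ray periodic of pattern $\gamma$; in particular, for $h\in G\setminus B$ the end $\omega_h\neq\infty$ admits such a ray, so the transition $\tau(\omega_h)\in W$ is defined.

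Next I would verify the invariant is well defined on double cosets. For $b_1,b_2\in B$ one has $(b_1hb_2)(\infty)=b_1(\omega_h)$ because $b_2$ fixes $\infty$; and since $b_1$ is a label-preserving automorphism fixing $\infty$ it carries the geodesic between $\omega_h$ and $\infty$ onto the geodesic between $b_1\omega_h$ and $\infty$ with the \emph{same} sequence of labels. As the transition depends only on that label sequence, $\tau(b_1\omega_h)=\tau(\omega_h)$, so $h\mapsto\tau(\omega_h)$ is constant on $BhB$. Since $h\in B$ if and only if $\omega_h=\infty$, the coset $B$ is disjoint from every $Bg_\tau B$.

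The crux is injectivity: if $\omega,\omega'\neq\infty$ have the same transition $\tau$, then some $b\in B$ sends $\omega$ to $\omega'$. Granting this, whenever $\tau(\omega_h)=\tau(\omega_{h'})$ I pick $b\in B$ with $b\omega_h=\omega_{h'}$; then $h'^{-1}bh$ fixes $\infty$, hence lies in $B$, which exhibits $h'\in BhB$. To build $b$ I would compare the bi-infinite geodesics from $\omega$ and from $\omega'$ to $\infty$: by Lemma~\ref{lem:breaks} each is $\gamma$-periodic on both sides of its transition, and since both transitions equal $\tau$ the two lines carry identical label sequences up to a shift. I would take the label-preserving isomorphism between them that matches the two transition segments in the order running from the $\omega$-side to the $\infty$-side; this orientation forces $\infty\mapsto\infty$ and $\omega\mapsto\omega'$. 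Finally I would extend this isomorphism of lines to an automorphism $b\in G$ using the universality of the $\Q$-covering: corresponding vertices carry equal labels, so the branches hanging off them have matching label-multisets and are isomorphic as labelled rooted trees, permitting an inductive extension over all of $\mathcal{T}_\Q$. The resulting $b$ fixes $\infty$ and sends $\omega$ to $\omega'$. The main obstacle is this extension step together with its degenerate case: when the transition is a single vertex the geodesic is fully $\gamma$-periodic, the transition no longer singles out a vertex, and matching the lines requires aligning phase and orientation by hand. This is exactly where the hyperbolic element is needed, since a power of $g$ realises the required shift along the $\infty$-direction inside $B$; I would dispose of this case first, normalising by $g$ before extending.

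For covering and surjectivity it remains to realise each $\tau\in W$. By definition of $W$ there is an end $\omega_\tau\neq\infty$ with transition $\tau$, and $\omega_\tau$ carries a ray periodic of pattern $\gamma$; the same extension argument---now matching a purely periodic ray to a ray representing $\infty$ after choosing starting vertices to align phases---produces $h\in G$ with $h(\infty)=\omega_\tau$, and I set $g_\tau:=h$. Conversely every $h\in G\setminus B$ has $\omega_h\neq\infty$ with $\tau(\omega_h)\in W$, so $h\in Bg_{\tau(\omega_h)}B$. Combined with the disjointness already noted, this gives the decomposition \eqref{eq:bwb}.
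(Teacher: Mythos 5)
Your proposal is correct and takes essentially the same route as the paper's proof: you use the transition from $h(\infty)$ to $\infty$ as a complete invariant of the double cosets, and you produce both $g_\tau$ and the connecting element $b \in B$ by extending label-preserving isomorphisms of rays and of bi-infinite geodesics to automorphisms of $\mathcal{T}_\Q$, exactly as the paper does. The only differences are expository --- you spell out details the paper leaves implicit (well-definedness of the invariant on double cosets, the inductive extension step, and the degenerate fully periodic case) --- not a difference in approach.
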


\begin{proof}
For each $\tau\in W$ choose $\omega_\tau \in \partial \mathcal{T}_\Q$ such that $\tau$ is the transition from $\omega_\tau$ to~$\infty$. Then, in the notation of Lemma~\ref{lem:breaks}, the rays $(\lambda(w_{i}))_{i=-m}^{-\infty}$ and $(\lambda(w_{i}))_{i=n}^\infty$ are isomorphic. This isomorphism extends to an automorphism of $\mathcal{T}_\Q$ that maps $\infty$ to $\omega_\tau$ and which we shall call $g_\tau$. Then every $h\in g_\tau B$ maps $\infty$ to $\omega_\tau$ and, for every $h\in Bg_\tau B$, the transition from $h.\infty$ to $\infty$ is equal to $\tau$. Hence the double cosets $Bg_\tau B$ are disjoint for distinct $\tau$. 

On the other hand, consider $h\in G$. If $h \not\in B$, then $\omega = h.\infty$ satisfies the hypotheses of Lemma~\ref{lem:breaks}. Let $\tau$ be the transition from $h.\infty$ to $\infty$. Then the bi-infinite path from $h.\infty$ to $\infty$ is isomorphic to the path from $g_\tau.\infty$ to $\infty$ and this isomorphism extends to an automorphism, $b_1$ say, of $\mathcal{T}_\Q$ that fixes $\infty$ and maps $h.\infty$ to $g_\tau.\infty$. Hence $(g_\tau^{-1} b_1h).\infty = \infty$ and $g_\tau^{-1} b_1h = b_2 \in B$. Hence $h = b_1^{-1}g_\tau b_2\in \bigsqcup_{\tau\in W} Bg_\tau B$ and \eqref{eq:bwb} holds.
\end{proof}

\subsection{Enumerating Orbits in Label-Regular Trees}
The following proposition provides an explicit formula for calculating the number of distinct paths in $\mathcal{T}_\Q$ with the same sequence of labels starting from a fixed vertex $v \in V\mathcal{T}_\Q$.

\begin{prop}
\label{prop:no_of_paths}
Let $\mathcal{T}_\Q$ be a label-regular tree labelled by the elements of $C$, and $\Q = (a_{ij})_{i,j\in C}$ be a $C \times C$ matrix whose entries are non-negative integers such that $a_{ij}=0$ if and only if $a_{ji}=0$. If $\gamma = (c_i)_{i=1}^{k} \subseteq C$ is a finite sequence of labels, then given a fixed vertex $v \in V(\mathcal{T}_\Q)$ labelled $c_1$, there are exactly
$$
\mathcal{O}(\Q, \gamma) = a_{c_1 c_2} \cdot \prod_{i=2}^{k-1} ( a_{c_i c_{i+1}} - \delta_{c_{i-1} c_{i+1}})
$$
distinct paths with the labelling $(c_i)_{i=1}^{k}$ starting at the vertex $v$, where $\delta_{ij}$ denotes the Kronecker delta symbol.
\end{prop}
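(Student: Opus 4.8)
The plan is to count the paths directly by building them one vertex at a time, which amounts to an induction on the length $k$. Writing a path with label sequence $\gamma$ as $(v_1, v_2, \dots, v_k)$ with $v_1 = v$, the data to be chosen are the successive vertices $v_2, \dots, v_k$, subject to $v_{i+1}$ being adjacent to $v_i$, to $\lambda(v_{i+1}) = c_{i+1}$, and to all the $v_i$ being distinct. I would first dispose of the trivial case $k = 1$ (one path, namely $v$ itself) and then treat the first step: by the definition of $\Q$ together with label-regularity, the number of neighbours of $v_1$ carrying the label $c_2$ is exactly $a_{c_1 c_2}$, independent of which vertex of label $c_1$ we started from. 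This accounts for the leading factor.

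The core of the argument is the inductive step. Given any partial path $(v_1, \dots, v_i)$ with $i \geq 2$ whose labels agree with $(c_1, \dots, c_i)$, I would count the admissible extensions $v_{i+1}$. There are $a_{c_i c_{i+1}}$ neighbours of $v_i$ with label $c_{i+1}$, and the only constraint beyond adjacency and label is that $v_{i+1} \notin \{v_1, \dots, v_i\}$. Here is where acyclicity of the tree enters: among $v_1, \dots, v_i$, the only vertex adjacent to $v_i$ is $v_{i-1}$, since any earlier $v_j$ with $j \leq i-2$ adjacent to $v_i$ would produce the cycle $v_j v_{j+1} \cdots v_i v_j$. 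Thus the no-repetition condition forces us to exclude at most the single vertex $v_{i-1}$, and $v_{i-1}$ must be excluded precisely when it already carries the target label, i.e. when $c_{i-1} = c_{i+1}$. Hence the number of admissible $v_{i+1}$ equals $a_{c_i c_{i+1}} - \delta_{c_{i-1} c_{i+1}}$.

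The decisive observation for assembling these counts into a product is that this extension count depends only on the labels $c_{i-1}, c_i, c_{i+1}$ and not on the particular partial path chosen; this is exactly what label-regularity guarantees. Consequently the multiplication principle applies: if $N_i$ denotes the number of valid partial paths on $i$ vertices, then $N_{i+1} = N_i \cdot (a_{c_i c_{i+1}} - \delta_{c_{i-1} c_{i+1}})$ for $i \geq 2$, and iterating from $N_1 = 1$ and $N_2 = a_{c_1 c_2}$ yields the stated formula $\mathcal{O}(\Q, \gamma)$. I would also remark that this simultaneously handles incompatible sequences: if some factor vanishes (either because $a_{c_i c_{i+1}} = 0$, or because the only neighbour with the required label is $v_{i-1}$ and $a_{c_i c_{i+1}} = 1$), the product is $0$, correctly reflecting that no path with labelling $\gamma$ exists.

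The main obstacle is really just the acyclicity step and making the multiplication principle rigorous. The only place subtlety can creep in is the claim that $v_{i-1}$ is the unique earlier vertex adjacent to $v_i$; once that is established, the remaining work is bookkeeping, provided one is careful to verify that the per-step count is genuinely a constant over all partial paths, so that the product of the per-step counts indeed equals the total number of paths.
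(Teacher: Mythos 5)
Your proposal is correct and follows essentially the same route as the paper's own proof: an induction on the length of the label sequence, with base case $a_{c_1 c_2}$ and inductive step counting extensions as $a_{c_i c_{i+1}} - \delta_{c_{i-1} c_{i+1}}$. The only difference is that you spell out two points the paper leaves implicit --- that acyclicity of the tree makes $v_{i-1}$ the unique previously visited neighbour of $v_i$, and that label-regularity makes the per-step count independent of the chosen partial path so the multiplication principle applies --- which is a welcome tightening rather than a departure.
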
 

\begin{proof} 
Fix a vertex $v \in \mathcal{T}_\Q$ with label $c_1$. We prove the result by induction. Clearly it is true for $k=2$, for there are exactly $a_{c_1 c_2}$ vertices labelled $c_2$ adjacent to $v$. Now assume that the result holds for all sequences of labels up to length $k$. Then given a sequence of labels $(c_1, \dots, c_k, c_{k+1}) \subseteq C$, by the induction hypothesis there are exactly $m = a_{c_1 c_2} \prod_{i=2}^{k-1} ( a_{c_i c_{i+1}} - \delta_{c_{i-1} c_{i+1}} )$ distinct paths with the labelling $(c_1, \dots, c_k)$ starting at $v$. Then, if $c_{k-1} \ne c_{k+1}$, there are exactly $ma_{c_{k} c_{k+1}}$ paths with the labelling $(c_1, \dots, c_k, c_{k+1})$ starting at $v$, and if $c_{k-1} = c_{k+1}$, there are $m(a_{c_{k} c_{k+1}} - 1)$ such paths. 
\end{proof}

Note that Proposition~\ref{prop:no_of_paths} incorporates the condition for the sequence $\gamma$ to be compatible with the labelling: if $a_{ij} = 1$ for any $i,j\in C$ and $\gamma$ has $c_k = i$ and $c_{k-1} = j = c_{k+1}$ for any~$k$, then $\mathcal{O}(\Q, \gamma) = 0$ and $\gamma$ is not compatible with the labelling. 

\begin{cor}
\label{cor:orbit_size}
Let $\mathcal{T}_\Q$ be a label-regular tree whose vertices are labelled by $C$ and fix a vertex $v \in V\mathcal{T}_\Q$. Given any vertex $u \in V\mathcal{T}_\Q$, let $\gamma = (c_i)_{i=1}^{n} \subseteq C$ be the sequence of labels on the unique path from the vertex $v$ to $u$. The size of the orbit of $u$ under the action of $\stab_{\Aut(\mathcal{T}_\Q)}(v)$ is precisely $\mathcal{O}(\Q, \gamma)$. 
\end{cor}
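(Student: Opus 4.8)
The plan is to prove the two inclusions $\stab_{\Aut(\mathcal{T}_\Q)}(v)\cdot u \subseteq P_\gamma$ and $P_\gamma \subseteq \stab_{\Aut(\mathcal{T}_\Q)}(v)\cdot u$, where $P_\gamma$ denotes the set of vertices $w$ for which the geodesic from $v$ to $w$ carries the label sequence $\gamma$, and then to identify $|P_\gamma|$ with $\mathcal{O}(\Q,\gamma)$. Write $K = \stab_{\Aut(\mathcal{T}_\Q)}(v)$. Since $\mathcal{T}_\Q$ is a tree, each vertex is joined to $v$ by a unique path, so sending a path starting at $v$ to its terminal vertex gives a bijection between the paths from $v$ with label sequence $\gamma$ and the elements of $P_\gamma$. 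Hence Proposition~\ref{prop:no_of_paths} yields $|P_\gamma| = \mathcal{O}(\Q,\gamma)$, and it remains only to show $K\cdot u = P_\gamma$.

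For the inclusion $K\cdot u \subseteq P_\gamma$: every $k\in K$ fixes $v$ and preserves the labelling, so it carries the geodesic from $v$ to $u$ isometrically onto the geodesic from $v$ to $k(u)$ while preserving the label of each vertex. The latter geodesic therefore has label sequence $\gamma$, so $k(u)\in P_\gamma$. This already gives the upper bound $|K\cdot u|\le \mathcal{O}(\Q,\gamma)$.

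For the reverse inclusion it suffices to show that $K$ acts transitively on $P_\gamma$: given $u,u'\in P_\gamma$ with geodesics $P=(v=p_0,\dots,p_n=u)$ and $P'=(v=q_0,\dots,q_n=u')$, I would build $k\in K$ with $k(u)=u'$. As both geodesics carry $\gamma$, the assignment $p_i\mapsto q_i$ is a label-preserving isomorphism of finite paths fixing $v$, and the task is to extend it to a label-preserving automorphism of $\mathcal{T}_\Q$. I would extend it outward from the path by matching neighbours: at an interior pair $(p_i,q_i)$ the two vertices share the label $c_i$, so by label-regularity each has $a_{c_i j}$ neighbours of label $j$; the path-neighbours $p_{i\pm1}$ and $q_{i\pm1}$ are already matched and carry labels $c_{i\pm1}$, so the unused neighbours of $p_i$ and of $q_i$ number $a_{c_i j}-\delta_{c_{i-1} j}-\delta_{c_{i+1} j}$ in each label class $j$ and hence agree. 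Choosing a label-respecting bijection between these unused neighbours and recursing into the subtrees they root, the extension proceeds to infinity. The homogeneity input making each recursive step possible is that the rooted subtree cut off along an oriented edge depends, up to label-preserving rooted isomorphism, only on the ordered pair of labels at that edge; this follows from the uniqueness up to isomorphism of the $\Q$-covering $\mathcal{T}_\Q$.

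The main obstacle is this extension step: checking that the local bijections glue into a genuine element of $K$. Concretely this requires (i) verifying at every stage that the counts of unused neighbours match, which reduces to the label-regularity identity together with the observation that exactly the two path-neighbours are consumed at each interior vertex (only one at the endpoints $u,u'$, and the fixed vertex $v$ handled symmetrically); and (ii) invoking the rooted-subtree isomorphism so the choices extend consistently over the whole tree. Once transitivity of $K$ on $P_\gamma$ is in hand, combining it with the first inclusion gives $K\cdot u = P_\gamma$, whence $|K\cdot u| = |P_\gamma| = \mathcal{O}(\Q,\gamma)$, as asserted.
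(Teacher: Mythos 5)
Your proposal is correct and amounts to the argument the paper intends: the corollary is stated without proof as an immediate consequence of Proposition~\ref{prop:no_of_paths} combined with transitivity of $\stab_{\Aut(\mathcal{T}_\Q)}(v)$ on vertices whose geodesic from $v$ carries a given label sequence, which is the same extend-a-label-preserving-path-isomorphism-to-an-automorphism principle the paper invokes without detail in the proofs of Theorems~\ref{thm:kak} and~\ref{thm:bwb}. Your levelwise neighbour-matching construction (with the $a_{c_i j}-\delta_{c_{i-1}j}-\delta_{c_{i+1}j}$ count check at interior vertices and the one-consumed-neighbour adjustment at the endpoints) simply makes that implicit extension step explicit, and it is sound.
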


%---------------------------------------------------------------------------------------------------------------------------------------------------------------------------------------------------------------------------------------------

\section{Closedness of Range for Homomorphisms from Automorphism Groups of Label-Regular Trees} \label{sec:hom}

It is shown in this section that, for certain label-regular trees $\mathcal{T}_\Q$, every continuous homomorphism from the group $\Aut^+(\mathcal{T}_\Q)$ to a topological group has closed range. The proof uses the $(KAK)$-decomposition  established in Theorem~\ref{thm:kak} for an appropriate choice of $K$, and additional results from~\cite{Ref1} must be recalled in order to say how $K$ is chosen. First, a vertex $v$ is a \emph{point of ramification} if it has at least $3$ neighbours and, as shown in \cite[\S6.1]{Ref1}, $\Aut^+(\mathcal{T}_\Q)$ is generated by the stabilisers of the points ramification. We shall choose $K = \stab_{\Aut(\mathcal{T}_\Q)}(v)$ with $v\in V\mathcal{T}_\Q$ a point of ramification. Then $K\leq \Aut^+(\mathcal{T}_\Q)$ and, using the notation of Theorem~\ref{thm:kak} and defining $A^+ = \{\alpha\in A \mid g_\alpha\in \Aut^+(\mathcal{T}_\Q)\}$, the decomposition~\eqref{eq:kak} becomes 
\begin{equation}
\label{eq:kak2}
\Aut^+(\mathcal{T}_\Q) = \bigsqcup_{\alpha\in A^+} Kg_\alpha K.
\end{equation}

The proof also uses the following notion, which extends the idea of a contraction subgroup for a single element of a topological group, see~\cite{Ref8}, to sequences.
%%%
\begin{dfn}
\label{defn:contraction}  
Suppose that $G$ is a topological group and let $(g_i)_{i=1}^{\infty} \subseteq G$. The \emph{contraction subgroup} for $( g_i )_{i=1}^{\infty}$ is 
$$
\con( g_i ) = \left\{ x \in G : g_{i} x g_{i}^{-1} \to \id_G \text{ as } i \to \infty \right\}.
$$
\end{dfn}
%%%
\noindent  The contraction subgroup for sequences of the double-coset representatives in~\eqref{eq:kak2} is used, and it is important that it be contained in $\Aut^+(\mathcal{T}_\Q)$. The contraction subgroup is shown to be non-trivial in the proof of Theorem~\ref{thm:closed} by seeing that, for edges $\{v,w\}\in E\mathcal{T}_\Q$ with $v$ the ramification point chosen in the first paragraph, there is $x\in \con( g_i )$ that is equal to the identity on one of the half-trees obtained when $\{v,w\}$ is deleted and not the identity on the other half-tree. Such elements exist as soon as one of the entries in $\Q$ is at least~$2$, by~\cite[Proposition~5.4]{Ref1}, and belong to $\Aut^+(\mathcal{T}_\Q)$ by definition. 
%%%
\begin{thm}
\label{thm:closed}
Let $\mathcal{T}_\Q$ be a label-regular tree such that $\Aut(\mathcal{T}_\Q)$ leaves no proper non-empty subtree and no end of $\mathcal{T}_\Q$ invariant. Suppose that $\varphi: \Aut^+(\mathcal{T}_\Q) \rightarrow G$ is a continuous homomorphism to a topological group $G$. Then the range of $\varphi$ is closed in $G$.  
\end{thm}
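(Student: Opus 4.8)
The strategy is the one familiar from $p$-adic groups: combine the Cartan-type decomposition of Theorem~\ref{thm:kak} with a contraction subgroup, the main adaptation being that the coset representatives are no longer powers of a single element, so the generalised notion of Definition~\ref{defn:contraction} is needed. First I would reduce to the case that $\varphi$ is injective. Since $\Aut(\mathcal{T}_\Q)$ leaves no proper non-empty subtree or end invariant, \cite[Th\'eor\`eme~4.5]{Ref1} shows that $\Aut^+(\mathcal{T}_\Q)$ is either trivial or (abstractly) simple. If it is trivial the range is $\{\id_G\}$, which is closed; otherwise $\ker\varphi$ is a normal subgroup, so by simplicity it is either all of $\Aut^+(\mathcal{T}_\Q)$ (range $\{\id_G\}$ again) or trivial. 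Thus I may assume $\varphi$ is injective, and also that some entry of $\Q$ is at least $2$, since if every $a_{ij}\le 1$ then each vertex stabiliser is trivial and hence $\Aut^+(\mathcal{T}_\Q)$ is trivial.

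To show the range is closed, take $y$ in its closure and a net $(h_\beta)$ in $\Aut^+(\mathcal{T}_\Q)$ with $\varphi(h_\beta)\to y$. Using \eqref{eq:kak2} with $K=\stab_{\Aut(\mathcal{T}_\Q)}(v)$ for a point of ramification $v$, write $h_\beta = k_\beta g_{\alpha_\beta} k_\beta'$ with $k_\beta,k_\beta'\in K$ and $\alpha_\beta\in A^+$. Since $K$ is compact and open, after passing to a subnet $k_\beta\to k$ and $k_\beta'\to k'$ in $K$, so that $\varphi(g_{\alpha_\beta})=\varphi(k_\beta)^{-1}\varphi(h_\beta)\varphi(k_\beta')^{-1}\to \varphi(k)^{-1}y\,\varphi(k')^{-1}=:z$. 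The argument splits according to whether the distances $d(v,g_{\alpha_\beta}(v))$, equivalently the lengths of the sequences $\alpha_\beta$, stay bounded. If they are bounded, then by local finiteness only finitely many $\alpha\in A^+$ occur, so along a further subnet $g_{\alpha_\beta}$ is a constant $g_\alpha$; then $h_\beta\to kg_\alpha k'\in\Aut^+(\mathcal{T}_\Q)$ and continuity gives $y=\varphi(kg_\alpha k')$, placing $y$ in the range.

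The substance is to rule out the unbounded case. Here I would pass to a subnet along which $d(v,g_{\alpha_\beta}(v))\to\infty$ and the geodesics $[v,g_{\alpha_\beta}(v)]$ converge to an end $\xi\in\partial\mathcal{T}_\Q$ (possible by local finiteness), so that these geodesics eventually share their first edge $\{v,u_1\}$. As $v$ has degree at least $3$ I fix a neighbour $w\ne u_1$ and let $H$ be the half-tree cut off by $\{v,w\}$ not containing $v$; by \cite[Proposition~5.4]{Ref1} there is a non-trivial $x\in\Aut^+(\mathcal{T}_\Q)$ fixing the complementary half-tree pointwise and supported on $H$. The support of $g_{\alpha_\beta} x g_{\alpha_\beta}^{-1}$ lies in $g_{\alpha_\beta}(H)$, the half-tree cut off by $\{g_{\alpha_\beta}(v),g_{\alpha_\beta}(w)\}$ on the side away from $g_{\alpha_\beta}(v)$. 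Provided $g_{\alpha_\beta}(w)$ does not lie on $[v,g_{\alpha_\beta}(v)]$, every vertex of $g_{\alpha_\beta}(H)$ is at distance at least $d(v,g_{\alpha_\beta}(v))\to\infty$ from $v$, so for each fixed vertex the conjugate eventually acts trivially and $g_{\alpha_\beta} x g_{\alpha_\beta}^{-1}\to\id$, realising $x$ as a contraction element in the sense of Definition~\ref{defn:contraction}. Continuity together with $\varphi(g_{\alpha_\beta})\to z$ then yields $z\,\varphi(x)\,z^{-1}=\id_G$, hence $\varphi(x)=\id_G$ and, by injectivity, $x=\id$ — a contradiction. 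The unbounded case is therefore impossible and the range is closed.

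\textbf{Main obstacle.} The delicate point is arranging that $g_{\alpha_\beta}(w)$ avoids the geodesic $[v,g_{\alpha_\beta}(v)]$, so that the support of the conjugate genuinely escapes to infinity rather than folding back towards $v$. Because the double-coset representative is determined only up to right multiplication by $K$, and $K=\stab(v)$ acts transitively on the neighbours of $v$ of a given label (with $g_{\alpha_\beta}$ carrying these to the like-labelled neighbours of $g_{\alpha_\beta}(v)$, whose label equals $\lambda(v)$ since $\alpha_\beta$ begins and ends with $\lambda(v)$), I can steer $g_{\alpha_\beta}(w)$ to any neighbour of $g_{\alpha_\beta}(v)$ of label $\lambda(w)$, at the cost of a further subnet passage that only alters $z$. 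Choosing $w$ so as to exploit an entry $a_{ij}\ge 2$ keeps $g_{\alpha_\beta}(w)$ off the geodesic; verifying that this can be done uniformly along the subnet, and confirming that the resulting $x$ lies in $\Aut^+(\mathcal{T}_\Q)$, is where the real work lies.
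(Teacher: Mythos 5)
Your proposal follows the paper's proof skeleton almost exactly up to the final geometric step: the reduction via simplicity (\cite[Th\'eor\`eme~4.5]{Ref1}), the decomposition~\eqref{eq:kak2} with $K$ the stabiliser of a point of ramification, compactness of $K$ to make $\varphi(g_{\alpha_\beta})$ converge after extracting subnets, the bounded/unbounded dichotomy, and the use of Definition~\ref{defn:contraction} to force a non-trivial element into $\ker\varphi$ (your reduction to injective $\varphi$ is a cosmetic rearrangement of the paper's ``$\con(g_{\alpha_i})\subseteq\ker\varphi$, hence $\varphi$ trivial'' step, and your observation that $a_{ij}\le1$ for all $i,j$ forces trivial vertex stabilisers is correct). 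Where you genuinely diverge is the construction of the contraction element. The paper takes $w$ to be the \emph{first} vertex of the geodesic $[v,g_{\alpha_i}(v)]$ (your $u_1$) and splits into two cases according to whether infinitely many $g_{\alpha_i}$ are translations with $v$ on their axis; in each case it locates $g_{\alpha_i}^{-1}(v)$ in one of the two half-trees at $\{v,w\}$ and supports $x$ on the \emph{other}, so that $x$ fixes the ball of radius $i$ about $g_{\alpha_i}^{-1}(v)$ and hence $g_{\alpha_i}xg_{\alpha_i}^{-1}$ fixes the ball of radius $i$ about $v$. You instead push a half-tree forward by $g_{\alpha_\beta}$ and must steer its attaching vertex off the geodesic; this trades the paper's dynamical case analysis for a combinatorial one, at the cost of modifying the double-coset representatives along the net.

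The obstacle you flag is real but closable, so your route is viable. The penultimate vertex $u'_\beta$ of $[v,g_{\alpha_\beta}(v)]$ is a neighbour of $g_{\alpha_\beta}(v)$, so $\lambda(u'_\beta)$ lies in the finite set $\{j\in C: a_{\lambda(v)j}\ne0\}$; pass to a subnet on which $\lambda(u'_\beta)=j_0$ is constant. If $v$ has a neighbour $w$ with $\lambda(w)\ne j_0$, then $g_{\alpha_\beta}(w)\ne u'_\beta$ automatically and no steering is needed. Otherwise every neighbour of $v$ is labelled $j_0$, so $a_{\lambda(v)j_0}=\deg v\ge3$, and since $\stab_{\Aut(\mathcal{T}_\Q)}(v)$ acts transitively on like-labelled neighbours of $v$ (the rooted labelled half-trees behind two like-labelled neighbours are isomorphic), right multiplication by suitable $k''_\beta\in K$ moves $g_{\alpha_\beta}k''_\beta(w)$ to any of the $\ge3$ neighbours of $g_{\alpha_\beta}(v)$ labelled $j_0$, in particular off $u'_\beta$; a further subnet makes $(k''_\beta)$ converge, altering only $z$ as you say. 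Two smaller points: the condition $w\ne u_1$ is never needed --- only $g_{\alpha_\beta}k''_\beta(w)\ne u'_\beta$ matters for the support to escape; and membership $x\in\Aut^+(\mathcal{T}_\Q)$ is immediate rather than ``real work'', since $x$ fixes the edge $\{v,w\}$ pointwise and so lies in $\Fix(\{v,w\})$, one of the defining generators of $\Aut^+(\mathcal{T}_\Q)$. What does require care is the existence of a non-trivial $x$ supported on the \emph{given} half-tree $H$: for this you need a vertex inside $H$ whose label $i$ has $a_{ij}\ge2$ for some $j$, which the paper obtains from \cite[Lemma~4.1 and Property~(P), \S4.2]{Ref1}; citing \cite[Proposition~5.4]{Ref1} alone, as you do, does not localise the branching vertex to $H$, so you should supplement it by the same appeal (or by noting that connectedness of $G_\Q$ puts vertices of every label arbitrarily deep in every half-tree).
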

%%%
\begin{proof}
There is nothing to prove if $\Aut^+(\mathcal{T}_\Q)$ is the trivial subgroup and so we may suppose that $\Aut^+(\mathcal{T}_\Q)$ is not trivial, in which case~\cite[Th\'eor\`eme~4.5]{Ref1} shows that it is simple. 

Consider a sequence $(g_i)_{i=1}^{\infty} \subseteq \Aut^+(\mathcal{T}_\Q)$ and suppose that $\varphi(g_i)$ converges to $\hat{g} \in G$. It must be shown that $\hat{g}\in \varphi(\Aut^+(\mathcal{T}_\Q))$. According to the decomposition of $\Aut^+(\mathcal{T}_\Q)$ into $K$-double cosets in~\eqref{eq:kak2}, there are sequences $(k_i)_{i=1}^{\infty},\ (k'_i)_{i=1}^{\infty}\subset K$ and $(\alpha_{i})_{i=1}^{\infty} \subseteq A^+$ such that $g_{i} = k_{i} g_{\alpha_i} k'_{i}$ for each~$i$. Passing to a subsequence we may suppose, by compactness of $K$, that the sequences $(k_{i})$ and $(k'_{i})$ converge to  elements $k, k' \in K$ respectively. Then 
$$
\varphi(g_{\alpha_i}) = \varphi(k_{i})^{-1} \varphi(g_{\alpha_i}) \varphi(k'_{i})^{-1} \to \varphi(k)^{-1} \hat{g} \varphi(k')^{-1}\mbox{ as }i \to \infty.
$$ 
In particular, the sequence $(\varphi(g_{\alpha_i}))_{i=1}^\infty$ converges.

If the sequence $(g_{\alpha_i})_{i=1}^{\infty}$ is bounded, it may be supposed to be constant by passing to a subsequence, so that $g_{\alpha_i} = g_\alpha$ for each $i$. Then $\hat{g} = \varphi(k) \varphi(g_\alpha) \varphi(k')$ belongs to $\varphi(\Aut^+(\mathcal{T}_\Q))$ and the proof is complete. Suppose, on the other hand, that $(g_{\alpha_i})$ is unbounded as $i\to\infty$ and set $\hat{a} := \lim_{i\to\infty} \varphi(g_{\alpha_i})$. Let $x \in \con(g_{\alpha_i})$. Then $\hat{a} \varphi(x) \hat{a}^{-1} = \lim_{i \to \infty} \varphi(g_{\alpha_i} x g_{\alpha_i}^{-1}) = \id_G$ by definition of the contraction subgroup for the sequence $(g_{\alpha_i})_{i=1}^{\infty}$ and continuity of $\varphi$. Hence the kernel of $\varphi$ contains $\con(g_{\alpha_i})$. If $\con(g_{\alpha_i})$ is not trivial, then $\varphi$ is the trivial homomorphism because $\Aut^+(\mathcal{T}_\Q)$ is simple. Therefore it suffices to complete the proof to show that, passing to a subsequence if necessary, $\con(g_{\alpha_i})$ is not trivial if $(g_{\alpha_i})$ is an unbounded sequence.

Suppose that $(g_{\alpha_i})_{i=1}^{\infty}$ is unbounded. Then we may assume, by passing to a subsequence, that for each $i\geq1$ the distance from $v$ to $g_{\alpha_i}(v)$ (see Theorem~\ref{thm:kak} for the notation) is at least $i$ and, since $\mathcal{T}_\Q$ is locally finite, that the first step of the path from $v$ to $g_{\alpha_n}(v)$ always passes through the same vertex, $w$ say. Denote the two semi-trees formed by removing the edge $\{ v,w \} \in E\mathcal{T}_\Q$ by $\mathcal{T}_v$ and $\mathcal{T}_w$, where $\mathcal{T}_v$ is the semi-tree containing the vertex $v$ and similarly for $\mathcal{T}_w$.

If infinitely many of the $g_{\alpha_i}$ are translations with $v$ on their axis, then it may be assumed that all are by passing to a subsequence. Then: $w$ is on the axis of $g_{\alpha_i}$ too; $g_{\alpha_i}^{-1}(v)\in \mathcal{T}_v$; and the distance from $g_{\alpha_i}^{-1}(v)$ to $v$ is at least $i$. Choose $x\in\Aut(\mathcal{T}_\Q)$ that fixes the semi-tree $\mathcal{T}_v$ and acts non-trivially on $\mathcal{T}_w$. Then $g_{\alpha_i} x g_{\alpha_i}^{-1}$ fixes the ball of radius $i$ around $v$. Such an $x$ exists because $\Aut(\mathcal{T}_{\Q})$ has Tits' Property~(P), see \cite[\S4.2]{Ref1}, and because \cite[Lemma~4.1]{Ref1} implies that there is a vertex in $\mathcal{T}_w$ with label $i$ and such that $a_{ij}\geq2$ for some~$j$. Hence $g_{\alpha_i} x g_{\alpha_i}^{-1} \to \id$ as $i \to \infty$ and $x \in \con(g_{\alpha_i})\setminus\{\id\}$.

If, on the other hand, only finitely many of the $g_{\alpha_i}$ are translations with $v$ on their axis, then it may be assumed that no $g_{\alpha_i}$ is a translation with $v$ on its axis. Then: $w$ is closer than $v$ to the fixed points of $g_{\alpha_i}$, if $g_{\alpha_i}$ is elliptic, or the axis of $g_{\alpha_i}$, if it is a translation; $g_{\alpha_i}^{-1}(v)\in \mathcal{T}_w$; and the distance from $g_{\alpha_i}^{-1}(v)$ to $v$ is at least $i$. Choose $x\in \Aut(\mathcal{T}_\Q)$ that fixes $\mathcal{T}_w$ and acts non-trivially on $\mathcal{T}_v$. Then the automorphism $g_{\alpha_i} x g_{\alpha_i}^{-1}$ fixes the ball of radius $i$ around $v$. Hence, $g_{\alpha_i} x g_{\alpha_i}^{-1} \to \id$ as $i\to \infty$ and $x \in \con(g_{\alpha_i})\setminus\{\id\}$.
\end{proof}

The proof of the preceding theorem showed that the following holds:

\begin{prop}
Let $\mathcal{T}_\Q$ be a label-regular tree and write $\Aut(\mathcal{T}_\Q) = \bigsqcup_{\alpha\in A} Kg_\alpha K$ as in Theorem \ref{thm:kak}. Given any sequence $({\alpha_i})_{i=1}^{\infty} \subseteq A$, either $(\alpha_i)_{i=1}^{\infty}$ is bounded, or $(g_{\alpha_i})_{i=1}^{\infty}$ has a subsequence whose contraction group is non-trivial.
\end{prop}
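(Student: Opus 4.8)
The plan is to extract the dichotomy already carried out inside the proof of Theorem~\ref{thm:closed}, stripped of the features particular to $\Aut^+(\mathcal{T}_\Q)$ and to the choice of a ramification point. First I would pin down the meaning of \emph{bounded}: since $Kg_\alpha K$ consists exactly of those $g$ for which the geodesic from $v$ to $g(v)$ carries the label sequence $\alpha$ (Theorem~\ref{thm:kak}), the sequence $(\alpha_i)$ is bounded precisely when the distances $d(v,g_{\alpha_i}(v))=|\alpha_i|-1$ are bounded; equivalently $(g_{\alpha_i})$ meets only finitely many double cosets and hence is relatively compact. In that case the first alternative holds and there is nothing to prove, so I would assume $(\alpha_i)$ is unbounded and, passing to a subsequence, arrange that $d(v,g_{\alpha_i}(v))\ge i$ for every $i$.

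Next, using local finiteness of $\mathcal{T}_\Q$, I would pass to a further subsequence so that the geodesic from $v$ to $g_{\alpha_i}(v)$ leaves $v$ through one fixed neighbour $w$; deleting the edge $\{v,w\}$ splits the tree into the two semi-trees $\mathcal{T}_v\ni v$ and $\mathcal{T}_w\ni w$. I would then run the same case split as in Theorem~\ref{thm:closed}, according to whether infinitely many $g_{\alpha_i}$ are translations having $v$ on their axis, passing to a subsequence so that one case holds uniformly. In the first case one has $g_{\alpha_i}^{-1}(v)\in\mathcal{T}_v$ with $d(g_{\alpha_i}^{-1}(v),v)\ge i$, while in the complementary case (elliptic elements, or translations whose axis omits $v$) one has $g_{\alpha_i}^{-1}(v)\in\mathcal{T}_w$ with the same distance bound.

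With the geometry in place the conclusion is uniform: choose $x\in\Aut(\mathcal{T}_\Q)$ that fixes pointwise the semi-tree containing $g_{\alpha_i}^{-1}(v)$ and acts non-trivially on the other one. Since $g_{\alpha_i}^{-1}(v)$ lies in that semi-tree at distance $\ge i$ from the separating edge, the ball $B(g_{\alpha_i}^{-1}(v),i)$ stays inside it and is therefore fixed by $x$; consequently $g_{\alpha_i}xg_{\alpha_i}^{-1}$ fixes the ball of radius $i$ about $v$. Hence $g_{\alpha_i}xg_{\alpha_i}^{-1}\to\id$ as $i\to\infty$, so $x\in\con(g_{\alpha_i})\setminus\{\id\}$, giving the second alternative for the chosen subsequence.

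The main obstacle, and the only place where any genuine hypothesis on $\Q$ enters, is producing this non-trivial $x$ fixing one semi-tree pointwise. As in the proof of Theorem~\ref{thm:closed}, I would obtain it from Tits' Property~(P) (\cite[\S4.2]{Ref1}) together with a branching vertex in the relevant semi-tree, that is, a vertex of some label $i$ with $a_{ij}\ge2$ for some $j$: the existence of such a vertex is supplied by \cite[Lemma~4.1]{Ref1} and the resulting half-tree-fixing automorphism by \cite[Proposition~5.4]{Ref1}. This step is essential rather than cosmetic, since if $\Q$ has no entry $\ge2$ (for instance when $\mathcal{T}_\Q$ is a bi-infinite line) then no such $x$ exists and the contraction group of every unbounded sequence is trivial; the statement is therefore to be read for trees with genuine branching, which is exactly the setting in which Theorem~\ref{thm:closed} carries content.
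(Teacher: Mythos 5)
Your proposal is correct and takes essentially the same route as the paper, whose own proof of this proposition is simply the dichotomy embedded in the proof of Theorem~\ref{thm:closed} (pass to a subsequence with $d(v,g_{\alpha_i}(v))\geq i$ leaving $v$ through a fixed neighbour $w$, split according to whether the $g_{\alpha_i}$ are translations with $v$ on their axis, and produce the half-tree-fixing element $x$ from Property~(P) together with \cite[Lemma~4.1]{Ref1} and \cite[Proposition~5.4]{Ref1}); your uniform justification via the ball $B(g_{\alpha_i}^{-1}(v),i)$ lying inside the pointwise-fixed semi-tree is a clean packaging of the two case-by-case assertions the paper merely states. Your closing caveat is apt and in fact more careful than the paper, which states the proposition for an arbitrary label-regular tree even though the existence of $x$ needs the branching context of Theorem~\ref{thm:closed}; note only that ``some entry of $\Q$ at least $2$'' is not quite the right criterion either, since the single-label line has $a_{cc}=2$ yet admits no non-trivial automorphism fixing a half-tree (the swap it provides is a reflection, not a half-tree-fixing element), so what is really required is a vertex in the relevant semi-tree with two sibling branches of equal label pointing away from the separating edge.
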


%---------------------------------------------------------------------------------------------------------------------------------------------------------------------------------------------------------------------------------------------

\bibliographystyle{amsalpha}

\end{document}